\documentclass[11pt]{article}
\usepackage[margin=1.2in]{geometry}
\usepackage{amsmath,amssymb,amsthm}
\usepackage{mathrsfs}
\usepackage{booktabs}
\usepackage[colorlinks=true,linkcolor=blue,citecolor=blue]{hyperref}

\newtheorem{theorem}{Theorem}[section]

\newtheorem{lemma}[theorem]{Lemma}

\newtheorem{remark}[theorem]{Remark}

\newcommand{\R}{\mathbb{R}}
\newcommand{\sG}{\mathscr{G}}

\newcommand{\bN}{\mathbf{N}}

\newcommand{\E}{\mathbb{E}}
\newcommand{\PP}{\mathbb{P}}
\newcommand{\cE}{\mathcal{E}}
\newcommand{\dd}{\,\mathrm{d}}
\DeclareMathOperator{\ent}{ent}

\title{The storage capacity of the Ising perceptron:\\
verification of the outstanding numerical conditions}
\author{Yitzchak Shmalo\\
\small Einstein Institute of Mathematics, The Hebrew University of Jerusalem\\
\small yitzchak.shmalo@gmail.com}
\date{July 2026}

\begin{document}
\maketitle

\begin{abstract}
Krauth and M\'ezard (1989) predicted that the maximum number of storable
patterns in the Ising perceptron is $M_N=\alpha_\star N + o(N)$ for an
explicit constant $\alpha_\star \approx 0.833$. Ding and Sun proved the matching lower bound,
and Huang recently proved the matching upper bound; each result is
conditional on an outstanding numerical hypothesis about an explicit
low-dimensional variational problem. Their global sign clauses were supported
by ordinary floating-point computations; Ding--Sun's final hypothesis also
contains curvature and endpoint-derivative clauses. Huang separately used
rigorous interval arithmetic for the other numerical inputs, but inherited the Ding--Sun
parameter rectangle and left open the main Condition~1.3---the nonpositivity, over the
whole plane, of a two-variable rate function with a degenerate point and
slowly approaching tails. That
unbounded optimization is the reason the capacity upper bound remained
conditional.

We verify both outstanding conditions in full. Reparametrizing Huang's functional into the
moment coordinates of the pair $(X,\tanh X)$ compresses the plane onto a
\emph{compact} convex body and turns the entropy term into a per-cell linear
function by convex duality; certified sweeps then establish nonpositivity on
the whole body away from a star-shaped region around the distinguished
degenerate point. On the star itself no box enclosure can work---the tilted
covariance $\nabla^2\Phi$ is nearly singular and its inverse loses an
essential cancellation---so we certify concavity of a majorant along rays
from that point, bounding the entropy Hessian by a certified majorant of
$\E[ff^\top\max_{b\in L}\operatorname{sech}^2(b\cdot f)]$ over sublevel-set
localizations of the dual that are themselves certified by one-dimensional
sweeps; no interval enclosure of the nearly singular covariance is inverted.
The same machinery, in one variable,
verifies the full Ding--Sun Condition~1.2, including its strict-curvature
clause at the degenerate zero and its one-sided derivative clause at the
singular endpoint, and we
re-establish the parameter rectangle
$(\alpha_\star, q_\star, \psi_\star)$ on which both papers rest. Together
these close the stated conditional reduction for the Krauth--M\'ezard prediction.
\end{abstract}

\section{Introduction}

Fix $\kappa = 0$ and let $g^1, g^2, \ldots$ be independent standard Gaussian
vectors in $\R^N$. The Ising perceptron is the random subset of the discrete
cube
\[
  S_N^M \;=\; \bigl\{ x \in \{-1,+1\}^N : \langle g^a, x\rangle \ge 0
  \text{ for all } 1 \le a \le M \bigr\},
\]
and its capacity is $M_N/N$, where $M_N$ is the largest $M$ with
$S_N^M \neq \emptyset$. Krauth and M\'ezard \cite{krauth1989storage} analyzed
the Bernoulli-disorder version with the replica method and predicted that $M_N/N$ converges in
probability to an explicit constant $\alpha_\star \approx 0.83307860$,
defined below. The prediction has been one of the standard test problems for
making the cavity method rigorous ever since.

Rigorous progress converged on the constant from both sides. Ding and
Sun \cite{ding2018capacity} proved
$\liminf_N \PP(M_N/N \ge \alpha) > 0$ for every $\alpha < \alpha_\star$;
combined with the sharp-threshold theorem of Nakajima and Sun
\cite{nakajima2023sharp}, this gives the lower bound with high probability.
Their general-subgaussian result extends Xu's Bernoulli-model theorem
\cite{xu2021sharp}; see also \cite{altschuler2024capacity}. Huang
\cite{huang2024capacity} proved
$\PP(M_N/N \ge \alpha) \to 0$ for every $\alpha > \alpha_\star$. Both
results are conditional. The Ding--Sun theorem assumes their full
Condition~1.2: a certain function
$\lambda \mapsto \mathscr{S}_\star(\lambda)$, arising as the exponential rate
of a restricted second moment, is negative except at its degenerate zero
$\lambda=0$ and singular endpoint zero $\lambda=1$, with
$\mathscr S_\star''(0)<0$ and
$\lim_{\lambda\uparrow1}\mathscr S_\star'(\lambda)>0$. Huang's theorem assumes his
Condition~1.3: a two-variable function
$(\lambda_1,\lambda_2) \mapsto \mathcal{S}_\star(\lambda_1,\lambda_2)$,
the rate function of a planted first moment, is nonpositive on all of
$\R^2$, with equality at $(1,0)$. The unresolved global sign conditions were
supported only by ordinary floating-point computations and remained stated
as hypotheses. Huang's second arXiv version rigorously verified all other
numerical conditions in Theorem~3.6 at $\kappa=0$, but explicitly excluded
Condition~1.3 itself. The final 2025 journal version of Ding--Sun retained
Condition~1.2 as a hypothesis; the original arXiv version described a check using nonrigorous
numerical integration.

This paper closes the remaining gap: we verify both conditions, and
re-derive the parameter rectangle
\[
\begin{aligned}
  \alpha_\star &\in [0.833078599,\, 0.833078600], \qquad&
  q_\star &\in [0.56394907949,\, 0.56394908030],\\
  \psi_\star &\in [2.5763513100,\, 2.5763513224]. &&
\end{aligned}
\]
of \cite[Proposition~1.3]{ding2018capacity}, on which the two papers'
$\kappa=0$ threshold calculations (and ours) rest, in rigorous interval
arithmetic. Consequently:

\begin{theorem}\label{thm:main}
For the Ising perceptron at $\kappa = 0$ with Gaussian disorder,
$M_N/N \to \alpha_\star$ in probability. The same holds for disorder with
i.i.d. mean-zero, unit-variance subgaussian entries, in particular for $\pm 1$
entries, by the universality theorem of \cite{nakajima2023sharp}.
\end{theorem}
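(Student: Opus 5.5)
The theorem is a combination of three existing conditional results, so the plan is to discharge their hypotheses and then assemble them. Nothing new is proved about the perceptron itself; all the work lies in certifying the numerical conditions in rigorous interval arithmetic.

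The assembly goes as follows.

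For the lower bound, we invoke \cite{ding2018capacity}. Under their Condition~1.2 and the parameter rectangle of their Proposition~1.3, it gives $\liminf_N \PP(M_N/N\ge\alpha)>0$ for every $\alpha<\alpha_\star$. The sharp-threshold theorem of \cite{nakajima2023sharp} upgrades this to probability tending to one.

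For the upper bound, we invoke \cite{huang2024capacity}. Given his Condition~1.3 together with the remaining inputs of his Theorem~3.6, which he already verified rigorously at $\kappa=0$, it gives $\PP(M_N/N\ge\alpha)\to0$ for every $\alpha>\alpha_\star$.

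Together these yield $M_N/N\to\alpha_\star$ in probability for Gaussian disorder. The extension to subgaussian (in particular $\pm1$) entries then follows from the universality statement in \cite{nakajima2023sharp}.

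The real content is therefore three certified verifications.

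\emph{The rectangle.} We solve the fixed-point equations for $(q_\star,\psi_\star)$ and the threshold equation for $\alpha_\star$. Krawczyk or interval-Newton iterations on the displayed boxes, with Gaussian integrals enclosed by validated quadrature, should give existence and uniqueness of the solution inside the stated rectangle.

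\emph{Condition~1.2 of Ding--Sun.} We partition $\lambda\in[0,1]$ into cells and bound $\mathscr S_\star$ above by a negative number on every cell that stays away from $0$ and $1$. Near $\lambda=0$ we instead enclose $\mathscr S_\star''$ on a small interval and show it is negative. Since $\mathscr S_\star(0)=\mathscr S_\star'(0)=0$, this gives negativity there. Near $\lambda=1$ we enclose a one-sided expansion of $\mathscr S_\star'$, which handles the derivative clause and gives negativity from the left.

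\emph{Condition~1.3 of Huang.} This is the hard part. The domain is all of $\R^2$, so the first step is to move to compact coordinates. Following the abstract, we parametrize by the moments of $(X,\tanh X)$. This maps the plane onto a compact convex body, and the entropy term becomes a Legendre dual, which on each cell we can majorize by a linear function. Certified box sweeps should then establish strict negativity everywhere except in a star-shaped neighbourhood of the degenerate point $(1,0)$.

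I expect the main obstacle to be that neighbourhood of $(1,0)$. There the tilted covariance $\nabla^2\Phi$ is nearly singular, so box enclosures of its inverse blow up.

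My first attempt would be a ray-concavity argument. Along each ray from the degenerate point, we show that a majorant of the functional is concave with vanishing value and nonpositive slope at the origin of the ray. To bound the entropy Hessian without inverting the covariance, we first localize the dual parameter to a certified sublevel set $L$ using one-dimensional sweeps. We then bound the entropy Hessian through $\E[ff^\top\max_{b\in L}\operatorname{sech}^2(b\cdot f)]$.

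Finally, the local and global sweeps have to overlap. I would arrange this by choosing the star's radius adaptively per angular sector.
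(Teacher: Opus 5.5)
Your assembly (Ding--Sun plus Nakajima--Sun for the lower bound, Huang for the upper bound, Nakajima--Sun universality for subgaussian disorder) follows the paper's route. So does your treatment of Huang's Condition~1.3: moment body, per-cell linear dual majorant, ray concavity with sublevel-set localization and the $\E[ff^\top\max_{b\in L}\operatorname{sech}^2(b\cdot f)]$ bound, and angle-dependent star radii. Certifying the rectangle by an interval-Newton/Krawczyk solve, instead of re-proving Ding--Sun's mapping, contraction ($\mathrm dP(R_\alpha)/\mathrm dq\le0.96$) and corner sign-change inequalities, is an acceptable variation, provided it delivers every clause of their Proposition~1.3.

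The genuine gap is the domain of Condition~1.2. You partition $\lambda\in[0,1]$, but the overlap ranges over $[\lambda_{\min},1]$ with $\lambda_{\min}=\ell(0)<0$, and negativity is required for negative overlaps too. As written, your plan leaves $[\lambda_{\min},0)$ unverified, apart from whatever small window your $\mathscr S_\star''$ check covers. So Ding--Sun's theorem is not unlocked. This branch is not a formality:
\begin{itemize}
\item The paper covers $[\lambda_{\min},-0.125]$ by a value grid on Ding--Sun's tilted majorant $\mathcal H+\mathcal Q$ (the $s=0.2$ shift).
\item It covers $[-0.125,-0.03]$ by the sign $(\mathcal H+\mathcal P)'>0$. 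There the margins are only about $2\times10^{-3}$, and ball evaluation of $I'$ over a cell inflates by two orders of magnitude. This forces mean-value cells built from a certified $I''$ grid and a closed-form $|I'''|$ bound.
\end{itemize}

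Two further points need repair.

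First, you propose to enclose $\mathscr S_\star=\mathcal H+\mathcal P+\mathcal A$ and $\mathscr S_\star''$ directly. The paper never encloses $\mathcal A$. It uses $\mathcal A\le0$, $\mathcal A(0)=\mathcal A'(0)=0$ and $\mathcal A''(0)\le0$ from Ding--Sun, so every certificate concerns $\mathcal H+\mathcal P$ or $\mathcal H+\mathcal Q$, and $\mathscr S_\star''(0)<0$ follows from $(\mathcal H+\mathcal P)''(0)<0$.

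Second, near $\lambda=1$ there is no finite enclosure of $\mathscr S_\star'$ to compute, because it tends to $+\infty$. The derivative clause comes analytically from Ding--Sun's endpoint calculation. Negativity needs an explicit inequality with certified constants:
\begin{itemize}
\item the entropy side, $\mathcal H(1-\iota)-\mathcal H(1)\le0.9987\,\iota+\tfrac{\iota}{2}\log\tfrac1\iota$, which follows from $\tfrac{\mathrm d}{\mathrm ds}\mathcal H(1-s)=\tfrac{1-q_\star}{2}\log A(s)$ and $A(s)\le2.711/s$;
\item the constraint side, $\mathcal P(1-\iota)-\mathcal P(1)\le0.285\,\iota-0.4446\sqrt\iota$.
\end{itemize}
The $\sqrt\iota$ term then wins on $(0,0.018]$. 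Ding--Sun's own printed constants for this step fail interval evaluation, which is why the paper re-derives this chain.
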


\begin{remark}[Status]
This manuscript presents a new computer-assisted proof claim. Its
source-bound package has passed the included internal verifiers and a separate
second-system audit, but it has not yet been independently reproduced or peer
reviewed.
\end{remark}

The verification is computer-assisted in the sense that finitely many
inequalities between explicitly defined real numbers are established by
interval arithmetic (we use the ball arithmetic of Arb via python-flint,
with certified adaptive quadrature where appropriate and certified fixed-grid
mean-value quadrature for Huang's $T$ evaluations). It is not, however,
a matter of running an integrator over the stated conditions. Three
genuinely mathematical obstructions have to be removed first, and most of
this paper is devoted to them.

First, Huang's condition is a statement over the whole plane. The
distinguished point $(1,0)$ has value zero, while $\mathcal{S}_\star$ also tends to zero along
degenerate directions only polynomially. Reparametrizing by the
moments of the pair $(X,\tanh X)$, $X\sim N(0,\psi_\star)$
(\S\ref{sec:huang-moments}), maps the plane onto a bounded convex body and
removes the tail entirely, while convex duality turns the entropy term into
a per-cell linear function; the bulk of the body is then certified by an
adaptive sweep (\S\ref{sec:huang-sweep}).

Second, both variational functions attain the value zero at degenerate
maximizers, where no direct interval evaluation can certify a sign, and near
Huang's maximizer no box enclosure of any Hessian of the problem can work
either: the tilted covariance $\nabla^2\Phi$ is nearly singular there, and
its inverse---which the curvature of the entropy term is---loses an
essential cancellation in interval arithmetic. We certify concavity of a
majorant along rays from the maximizer instead, bounding the entropy Hessian
by an explicit matrix over sublevel-set localizations of the dual variable;
no interval enclosure of the nearly singular covariance is inverted
(\S\ref{sec:huang-sweep}). The explicit positive-definite majorant is inverted
by its certified $2\times2$ determinant. The pinned value and
gradient at the center are exact identities of the fixed point and require
no numerics. For the Ding--Sun condition the degenerate zero at $\lambda=0$
is handled by derivative and second-derivative analysis, while the singular
endpoint $\lambda=1$ uses a corrected near-one chain whose numerical inputs
we certify (\S\ref{sec:ds},
\S\ref{sec:assembly}).

Third, the interval evaluations themselves have to be organized so that
widths do not defeat the margins, which are as small as $10^{-3}$ in the
interior of the Ding--Sun grid and near the star boundary of the Huang
sweep. Every delicate cell is evaluated in exact mean-value form---value at
the center plus an enclosed gradient times the radius---so that the
near-cancellations the margins depend on survive enclosure; the exact
identities $\mathcal{H}'(\lambda) = -(1-q_\star)\log A(\lambda)/2$ and the
explicit derivative integrals supply the gradients, and parameter-ball
evaluation subsumes the manual corner analyses mechanically.

\medskip
\noindent
The verification programs, canonical certificates, complete raw record trees,
and instructions to reproduce every number in this paper are available at
\cite{repo}. Every proof-relevant geometric value is frozen as an exact
rational, with the stated padding, and every certified sign comparison is
between balls whose correctness is guaranteed by Arb. We follow the journal
numbering of \cite{ding2018capacity} and the numbering of
\cite{huang2024capacity}, except where the Ding--Sun arXiv version is named
explicitly.

\section{The statements being verified}\label{sec:statements}

We use the normalizations of \cite{ding2018capacity,huang2024capacity}. With
$\varphi,\Psi$ the standard Gaussian density and upper tail, and
$\cE = \varphi/\Psi$, set for $q\in[0,1)$
\[
  F_{1-q}(x) = \frac{\cE}{\sqrt{1-q}}\Bigl(\frac{-x}{\sqrt{1-q}}\Bigr),
  \quad
  P(\psi) = \E\tanh^2(\sqrt\psi Z),
  \quad
  R_\alpha(q) = \alpha\,\E F_{1-q}(\sqrt q Z)^2,
\]
and the Gardner free energy $\sG(\alpha,q,\psi)$ of
\cite[eq.~(1.4)]{ding2018capacity}. The
threshold $\alpha_\star$ is the root of $\sG$ along the fixed point
$q=P(\psi),\ \psi=R_\alpha(q)$; Ding--Sun Proposition~1.3 makes this precise
and pins the rectangle of \S\ref{sec:gardner}. Huang's Condition~1.3 is
$\mathcal S_\star\le 0$, the object of \S\ref{sec:huang-moments}--%
\ref{sec:huang-sweep}; Ding--Sun's Condition~1.2 is
$\mathscr S_\star(\lambda)<0$ for $\lambda\notin\{0,1\}$ together with
$\mathscr S_\star''(0)<0$ and
$\lim_{\lambda\uparrow1}\mathscr S_\star'(\lambda)>0$, the object of
\S\ref{sec:ds}. \S\ref{sec:assembly} records which theorem consumes which
verified fact.

\section{The parameter rectangle}\label{sec:gardner}

Block~1 (\texttt{block1\_gardner.py}) re-proves Ding--Sun Proposition~1.3 in
interval arithmetic: thirteen certified inequalities establishing (i) that
$R_\alpha$ maps the stated $q$-intervals into the stated $\psi$-intervals;
(ii) the Almeida--Thouless contraction $\sup_{G}\,\mathrm dP(R_\alpha)/\mathrm dq
\le 0.96<1$, via $P'\le 0.08$ and $\mathrm dR/\mathrm dq\le 12$; (iii) the
fixed-point sign changes $P(R(q,\alpha))-q$ at the four corners; and (iv)
$\sG_\star(\alpha_{\mathrm{ub}})<0<\sG_\star(\alpha_{\mathrm{lb}})$. Each is a
one-dimensional certified Gaussian integral (adaptive \texttt{acb.integral}
plus an explicit tail bound), evaluated with $\alpha,q,\psi$ as balls covering
their whole ranges, which subsumes Ding--Sun's manual corner analysis. This
establishes $\alpha_\star\in[0.833078599,0.833078600]$ and the accompanying
$q_\star,\psi_\star$ intervals rigorously.

\section{The Ding--Sun condition}\label{sec:ds}

Ding--Sun's Condition~1.2, on
$[\lambda_{\min},1]$ with $\lambda_{\min}:=\ell(0)$, is
$\mathscr S_\star(\lambda)=\mathcal H(\lambda)
+\mathcal P(\lambda)+\mathcal A(\lambda)<0$ for $\lambda\notin\{0,1\}$,
with $\mathscr S_\star''(0)<0$ and
$\lim_{\lambda\uparrow1}\mathscr S_\star'(\lambda)>0$.
It is
one-dimensional and mirrors Huang's: a value sweep on the bulk, a local
second-derivative analysis at the degenerate zero $\lambda=0$, and
a separate singular endpoint estimate at $\lambda=1$.
Since $\mathcal A\le 0$, with
$\mathcal A(0)=\mathcal A'(0)=0$ and $\mathcal A''(0)\le0$
\cite[discussion following eq.~(2.35)]{ding2018capacity}, the certified
central bound $(\mathcal H+\mathcal P)''<0$ implies
$\mathscr S_\star''(0)<0$. Ding--Sun's exact endpoint calculation
\cite[proof of Theorem~1.4]{ding2018capacity} gives the
stronger statement $\mathscr S_\star'(\lambda)\to+\infty$ as
$\lambda\uparrow1$; it is analytic and needs no numerical certificate. For
the remaining global sign assertion, it suffices to bound
$\mathcal H+\mathcal P$ on the
positive branch and, on the negative branch, the tilted majorant
$\mathcal H+\mathcal Q$ (Ding--Sun's $s=0.2$ shift). We build the certified
evaluators (\texttt{dsfun.py}): $\mathcal H(\lambda)=\mathfrak H(A(\lambda))$
through the pair-entropy $\Gamma$ and the inverse map
$A(\lambda)=\ell^{-1}(\lambda)$. The computational grid is parametrized by
$A(\tau)=\exp(2\,\mathrm{atanh}\,\tau)$ and
$\lambda(\tau)=\ell(A(\tau))$,
with the well-conditioned $D_H(A)=(1-m^2)\bigl(1-2/(\Delta+1)\bigr)$ that
removes the interval blow-up of the raw $(A^2-1)/(\Delta+1)^2$ form; and
$\mathcal P,\mathcal Q$ through the double integral $I_s(\lambda)$, evaluated
by nested certified quadrature with mean-value corrections. The bulk cells
verify. Near $\lambda=1$ the printed constants in Lemma~8.2 and
Proposition~8.4 of the Ding--Sun arXiv version do not pass interval evaluation:
their respective integral
values are $2.678\ldots$, $3.162\ldots$, and $-0.4447\ldots$.  The following
records the corrected implication chain, rather than using those constants as
an unexplained numerical input.

\begin{lemma}[Corrected near-one bound]\label{lem:ds-near-one}
For $\lambda=1-\iota$ and $0<\iota\le0.018$,
\[
  \mathcal H(\lambda)+\mathcal P(\lambda)
  <\mathcal H(1)+\mathcal P(1)
  =-\sG_\star(\alpha_\star)=0.
\]
At the terminal cell's upper $\tau$-boundary, the certified lower endpoint of
$\lambda(0.99)$ is $0.98665\ldots>0.982$, so this neighborhood overlaps the
bulk sweep.
\end{lemma}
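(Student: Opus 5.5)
We want to show that $\mathcal H+\mathcal P$ is strictly increasing on $[0.982,1)$ and equals $-\sG_\star(\alpha_\star)=0$ at $\lambda=1$. We avoid any direct value sweep, since values near the endpoint cancel to zero and cannot be signed by interval evaluation. The identity $\mathcal H(1)+\mathcal P(1)=-\sG_\star(\alpha_\star)$ is exact: at $\lambda=1$ the two replicas coincide. The pair entropy $\Gamma$ then collapses to the single-replica entropy of the fixed point, and the double integral $I_0(1)$ collapses to the one-dimensional Gardner energy term. Both are evaluated at $(q_\star,\psi_\star)$, so the sum is $-\sG_\star$. This vanishes by the defining root condition of $\alpha_\star$ from \S\ref{sec:gardner}, which is Ding--Sun Proposition~1.3 re-proved in Block~1. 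This step needs no numerics beyond the rectangle.

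Writing $\lambda=1-\iota$, it then suffices to show that $\frac{d}{d\lambda}(\mathcal H+\mathcal P)>0$ on $(0.982,1)$. Integrating from $\lambda$ to $1$ then gives the strict inequality. I would use the exact identity $\mathcal H'(\lambda)=-(1-q_\star)\log A(\lambda)/2$. As $\lambda\uparrow1$ we have $A\to\infty$, so $-\log A\to-\infty$. The positive contribution must come from $\mathcal P'$, whose singular part behaves like a multiple of $\log(1/\iota)$ with a larger coefficient. This is the Ding--Sun mechanism behind $\mathscr S_\star'\to+\infty$. Concretely, I would split $\mathcal P'(\lambda)$ into three pieces:
\begin{itemize}
\item an explicit singular main term $c_1\log(1/\iota)$, with $c_1$ a closed-form constant;
\item a remainder $c_2+O(\sqrt\iota)$, with $c_2$ a certified integral;
\item a sign-definite error term bounded uniformly for $\iota\le0.018$.
\end{itemize}
I would similarly bound $\log A(\lambda)\le c_3\log(1/\iota)+c_4$ through the inverse map $\ell^{-1}$. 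The derivative inequality then reduces to $(c_1-(1-q_\star)c_3/2)\log(1/\iota)+c_2-(1-q_\star)c_4/2-\text{err}>0$. Since the coefficient of $\log(1/\iota)$ is positive, it is enough to check this at the largest admissible $\iota=0.018$, where $\log(1/\iota)$ is smallest. The constants are certified as one-dimensional Gaussian integrals with parameter balls covering the rectangle. This replaces the incorrect printed constants ($2.678\ldots$, $3.162\ldots$, $-0.4447\ldots$) of Lemma~8.2 and Proposition~8.4 with certified values.

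The overlap claim is a single certified evaluation. I would compute $A(0.99)=\exp(2\,\mathrm{atanh}\,0.99)$ as a ball, evaluate $\ell$ on it, and read off a lower endpoint $0.98665\ldots>0.982=1-0.018$. Since $\lambda(\tau)$ is monotone, the bulk sweep, which covers $\tau\le0.99$, reaches past the left end of this neighborhood.

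The main obstacle is the uniform error term in the expansion of $\mathcal P'$ near the singular endpoint. The integrand of $I_0$ degenerates there, because the conditional covariance $1-\lambda$ goes to $0$. I would first try the substitution $u=(x-y)/\sqrt{\iota}$, which rescales the singular direction. After that substitution I would bound the remainder by monotone majorants of $\cE$ and $\cE'$, using $\cE(x)\le x+1/x$-type inequalities. If that remainder is too loose at $\iota=0.018$, the fallback is to shrink the neighborhood and enlarge the terminal bulk cells correspondingly.
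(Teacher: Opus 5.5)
There is a genuine gap at the core of your argument: the singularity of $\mathcal P$ at $\lambda=1$ is not logarithmic. The paper's proof uses the rescaled double integral of Ding--Sun's Proposition~8.4 to get
\[
\mathcal P(1-\iota)-\mathcal P(1)\le 0.285\,\iota-0.4446\sqrt\iota\qquad(0<\iota\le0.025).
\]
The $-0.4447\ldots$ you quote is exactly this $\sqrt\iota$ coefficient. Hence $\mathcal P'(1-\iota)$ cannot be $c_1\log(1/\iota)+c_2+O(\sqrt\iota)$ plus a bounded error. Integrating any such bound would give $\mathcal P(1)-\mathcal P(1-\iota)=O(\iota\log(1/\iota))=o(\sqrt\iota)$, which contradicts the display.

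So the three-piece split you describe does not exist. The step ``compare the coefficients of $\log(1/\iota)$'' rests on a false asymptotic picture. Had both sides really been logarithmic, everything would hinge on an inequality $c_1>(1-q_\star)c_3/2$ for which you give no reason.

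The actual mechanism is a separation of scales. The entropy gain is only of order $\iota\log(1/\iota)$. From $1-\ell(A)\le 2.679/A+3.17/A^2\le 2.711/A$ for $A\ge100$, together with monotonicity of $\iota(A)$, one gets $A(s)\le 2.711/s$. Integrating $\frac{d}{ds}\mathcal H(1-s)=\frac{1-q_\star}{2}\log A(s)\le\frac12\log(2.711/s)$ then gives $\mathcal H(1-\iota)-\mathcal H(1)\le 0.9987\,\iota+\frac{\iota}{2}\log(1/\iota)$. The energy loss, by contrast, is of order $\sqrt\iota$. Adding the two bounds and dividing by $\sqrt\iota$ leaves $f(\iota)=\sqrt\iota\,(1.2837+\tfrac12\log(1/\iota))-0.4446$. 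This is increasing, and $f(0.018)<-0.00287$.

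Your substitution $u=(x-y)/\sqrt\iota$ is the right rescaling and would expose the $\sqrt\iota$ term, so the plan is repairable. If you keep a pointwise derivative criterion instead of the paper's comparison of value differences, two costs remain.
\begin{itemize}
\item You must bound $\mathcal P'$ uniformly near the degenerate endpoint, i.e.\ differentiate the singular double integral. The paper only needs a bound on $\mathcal P(1-\iota)-\mathcal P(1)$.
\item The derivative test is more demanding on the main term. The term $c\sqrt\iota$ contributes only $c/(2\sqrt\iota)$ to the derivative, while the entropy side loses only an $O(\iota)$ term in exchange. The paper's integrated inequality has a margin under $1\%$ at $\iota=0.018$. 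Heuristically, a derivative version at that endpoint would therefore fail with the paper's crude factor $\tfrac12$ in place of $(1-q_\star)/2$. You would need the sharp factor, or a smaller neighborhood together with a bulk sweep reaching closer to $1$.
\end{itemize}

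Your treatment of the endpoint identity $\mathcal H(1)+\mathcal P(1)=-\sG_\star(\alpha_\star)=0$ and of the overlap $\lambda(0.99)\ge 0.98665\ldots>0.982$ matches the paper.
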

\begin{proof}
Put $\iota(A)=1-\ell(A)$ and $A(s)=\iota^{-1}(s)$.  The two positive
one-dimensional integrals in Lemma~8.2 of the Ding--Sun arXiv version, including explicit
$|z|\ge9$ tails, give uniformly over the parameter rectangle
\[
  1-\ell(A)\le \frac{2.679}{A}+\frac{3.17}{A^2}
  \le\frac{2.711}{A}\qquad(A\ge100),
  \qquad 1-\ell(100)>0.025.
\]
Since $\iota(A)$ is decreasing, $A(s)>100$ and $A(s)\le2.711/s$ for
$0<s\le0.025$.  Ding--Sun's exact entropy derivative identity therefore
gives
\[
 \frac{\dd}{\dd s}\mathcal H(1-s)
 =\frac{1-q_\star}{2}\log A(s)
 \le\frac12\log\frac{2.711}{s}.
\]
After integration, using
$\tfrac12\log(2.711)+\tfrac12<0.9987$,
\begin{equation}\label{eq:ds-near-h}
 \mathcal H(1-\iota)-\mathcal H(1)
 \le0.9987\,\iota+\frac{\iota}{2}\log\frac1\iota.
\end{equation}
The rescaled double integral in Proposition~8.4 of the Ding--Sun arXiv version and its positive
linear term give, on $0<\iota\le0.025$,
\begin{equation}\label{eq:ds-near-p}
 \mathcal P(1-\iota)-\mathcal P(1)
 \le0.285\,\iota-0.4446\sqrt\iota.
\end{equation}
Here the envelope directions are important.  The argument of $\log\Psi$ is
chosen to give an upper envelope, while its positive density ratio is replaced
by a certified lower envelope.  Since $\log\Psi\le0$, the product is still an
upper bound.  The factors $\alpha_\star\sqrt{1+\lambda}$ are likewise replaced
by their lower endpoints, and deleting the complement of
$[-9,9]\times[0,9]$ deletes only nonpositive mass and raises the integral.
The negative-$z$ comparison also uses the separately checked pin
$\gamma_{\rm ub}<\psi_{\rm ub}$.  Thus no unrecorded tail or parameter
direction can reverse \eqref{eq:ds-near-p}.

Combining \eqref{eq:ds-near-h}--\eqref{eq:ds-near-p} and dividing by
$\sqrt\iota$ leaves
\[
 f(\iota)=\sqrt\iota\left(1.2837+\frac12\log\frac1\iota\right)-0.4446.
\]
Its derivative has the sign of
$1.2837+\tfrac12\log(1/\iota)-1>0$, and the certified endpoint evaluation is
$f(0.018)<-0.00287$.  Hence $f(\iota)<0$ throughout the claimed interval.
Finally, the last part~(a) grid cell certifies
$\lambda_{\rm lo}=0.98665\ldots>0.982$, providing the stated overlap.
\end{proof}

The neighborhood of $\lambda=0$ is handled by the derivative and
second-derivative certificates described in \S\ref{sec:assembly}.

\section{Huang's condition in moment coordinates}\label{sec:huang-moments}

Set $q_0=q_\star$ and $\psi_0=\psi_\star$; every computation below uses
balls covering their certified intervals.  Huang's Condition~1.3 asks that
\[
  \mathcal{S}_\star(\lambda_1,\lambda_2)
  = \inf_{s\ge 0}\Bigl\{ \tfrac12 s^2\psi_0 + \ent(\Lambda_{\lambda_1,\lambda_2})
    + \alpha_\star\, \E\log\Psi(V_s) \Bigr\} \le 0
  \qquad\text{for all }(\lambda_1,\lambda_2)\in\R^2,
\]
where $\Lambda_{\lambda_1,\lambda_2}(x)=\tanh(\lambda_1 x+\lambda_2\tanh x)$,
$X\sim N(0,\psi_0)$, $M=\tanh X$, and the constraint term depends on the
profile only through the two moments
\[
  a_1 = \E[X\,\Lambda], \qquad a_2 = \E[M\,\Lambda].
\]
The plane $\R^2$ of $(\lambda_1,\lambda_2)$ is unbounded. Huang's identities
give value zero at the distinguished degenerate point $(1,0)$; along some unbounded degenerate
directions the value also approaches zero only polynomially. Verifying a
nonpositivity statement over this domain by interval arithmetic is the
obstruction that stopped a direct check.

We remove it by changing coordinates. The profile
$\Lambda_{\lambda_1,\lambda_2}$ is exactly the maximum-entropy profile for its
own moments (this is the Lagrange condition that produced the two-parameter
family in the first place), so
\[
  \ent(\Lambda_{\lambda_1,\lambda_2}) = H(a_1,a_2)
  := \max\bigl\{\, \E\,\ent_2(\tfrac{1+\Lambda}2) :
     \E[X\Lambda]=a_1,\ \E[M\Lambda]=a_2,\ |\Lambda|\le 1 \,\bigr\},
\]
and therefore
\begin{equation}\label{eq:moment-reduction}
\begin{aligned}
  \sup_{\lambda_1,\lambda_2} \mathcal{S}_\star(\lambda_1,\lambda_2)
  &\le \sup_{(a_1,a_2)\in K}
       \bigl[\, H(a_1,a_2) + G(a_1,a_2) \,\bigr],\\
  G(a_1,a_2)
  &= \inf_{s\ge0}\bigl\{\tfrac12 s^2\psi_0
     + \alpha_\star\, \E\log\Psi(V_s)\bigr\}.
\end{aligned}
\end{equation}
The key point is that the new domain
\[
  K = \bigl\{ (\E[X\Lambda],\E[M\Lambda]) : |\Lambda|\le 1 \bigr\}
\]
is the \emph{moment body} of the pair $(X,M)$, a compact convex set with
support function $h(u,v)=\sup_{|\Lambda|\le1}\E[\Lambda(uX+vM)]=\E|uX+vM|$. It
is contained in the rectangle $|a_1|\le\E|X|$, $|a_2|\le\E|M|$. The unbounded
tail of the $(\lambda_1,\lambda_2)$ plane is compressed onto $\partial K$,
where the profiles are saturated ($|\Lambda|=1$) and hence
$H\!\restriction_{\partial K}=0$.
Also $\E|M|<\sqrt{\E M^2}=\sqrt{q_0}$, so
$D=\sqrt{1-a_2^2/q_0}$ below is real and uniformly positive on $K$.

Two consequences make \eqref{eq:moment-reduction} checkable. First, the
constraint term $G$ depends on $a=(a_1,a_2)$ explicitly, through
\[
 \begin{aligned}
 V_s(a)&=
 \frac{-\frac{a_2}{q_0}\sqrt{q_0}\,Z
       -\frac{a_1}{\psi_0}\bN}{D}+s\bN,
 &D&=\sqrt{1-a_2^2/q_0},\\
 \bN&=\frac{\cE(-\gamma Z)}{\sqrt{1-q_0}},
 &\gamma&=\sqrt{q_0/(1-q_0)},
 \end{aligned}
\]
where $Z$ is standard Gaussian.  For the resulting one-dimensional Gaussian
integral, write
\[
 \mathcal T(a,s):=\E\log\Psi(V_s(a)).
\]
For every fixed $s$,
\[
 G(a)\le \tfrac12 s^2\psi_0 + \alpha_\star\mathcal T(a,s).
\]
Thus a per-cell choice of $s$ gives a certified upper bound with no inner
optimization. Second, convex duality gives, for \emph{every} dual point
$(b_1,b_2)$,
\begin{equation}\label{eq:dual-bound}
  H(a_1,a_2) \le \Phi(b_1,b_2) - b_1 a_1 - b_2 a_2,
  \qquad \Phi(b_1,b_2) = \E\log 2\cosh(b_1 X + b_2 M),
\end{equation}
with equality when $(b_1,b_2)$ is the dual of $(a_1,a_2)$. Choosing $(b_1,b_2)$
per cell (found by a Newton solve on the convex dual objective, then used as a
fixed rational) turns the entropy into an explicit linear function of
$(a_1,a_2)$, tight on the cell.

\begin{lemma}[Moment reduction]\label{lem:moment}
Write $\boldsymbol\lambda=(\lambda_1,\lambda_2)$ and let
$a(\boldsymbol\lambda)$ denote its two moment coordinates. With $H$ and $G$
as above,
\[
  \mathcal S_\star(\boldsymbol\lambda)
  =H(a(\boldsymbol\lambda))+G(a(\boldsymbol\lambda)),
  \qquad
  \sup_{\boldsymbol\lambda\in\R^2}\mathcal S_\star(\boldsymbol\lambda)
  \le\sup_{a\in K}(H(a)+G(a)),
\]
and \eqref{eq:dual-bound} holds for all $(b_1,b_2)$.
\end{lemma}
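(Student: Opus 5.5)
The plan is to prove the three assertions separately, in the order: duality inequality \eqref{eq:dual-bound}, identification $\ent(\Lambda_{\boldsymbol\lambda})=H(a(\boldsymbol\lambda))$, and then the supremum comparison, which is immediate once the first two are in place.

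\textbf{Duality bound.} We work pointwise. For fixed $x$ and any $\ell\in[-1,1]$, the one-variable Fenchel inequality for the binary entropy gives
\[
\ent_2\bigl(\tfrac{1+\ell}2\bigr)\le \log 2\cosh t - t\ell
\qquad\text{for all } t\in\R .
\]
Equality holds exactly when $\ell=\tanh t$. This follows because $\ent_2(\tfrac{1+\ell}{2})$ is the Legendre conjugate of $\log2\cosh$ restricted to $[-1,1]$, with the convention $0\log0=0$ at $\ell=\pm1$. Take $t=b_1X+b_2M$, $\ell=\Lambda(X)$, and take expectations. All terms are integrable, since $\log2\cosh t\le\log2+|t|$ and $X$ is Gaussian. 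The result is $\E\,\ent_2(\tfrac{1+\Lambda}2)\le\Phi(b)-b_1\E[X\Lambda]-b_2\E[M\Lambda]$ for every admissible $\Lambda$. Taking the supremum over $\Lambda$ with the prescribed moments $(a_1,a_2)$ gives \eqref{eq:dual-bound} for all $(b_1,b_2)$.

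\textbf{Maximum-entropy identification.} Given $\boldsymbol\lambda$, set $b=\boldsymbol\lambda$. Then $\Lambda_{\boldsymbol\lambda}=\tanh(b_1X+b_2M)$ achieves pointwise equality in the inequality above. Hence
\[
\ent(\Lambda_{\boldsymbol\lambda})=\Phi(\boldsymbol\lambda)-\lambda_1a_1-\lambda_2a_2,
\qquad a=a(\boldsymbol\lambda).
\]
Since $\Lambda_{\boldsymbol\lambda}$ is itself feasible for the problem defining $H(a)$, we get $\ent(\Lambda_{\boldsymbol\lambda})\le H(a)$. The duality bound with $b=\boldsymbol\lambda$ gives the reverse inequality, so the two are equal. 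This is the ``equality when $b$ is the dual of $a$'' clause, and it requires no Lagrange-multiplier regularity: weak duality plus attainment suffices. Here $\ent(\Lambda)$ is read as $\E\,\ent_2(\tfrac{1+\Lambda}2)$ in Huang's normalization. I expect the main (if mild) obstacle to be checking this convention, together with checking that Huang's $V_s$ depends on $\Lambda$ only through $(a_1,a_2)$ as written.

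\textbf{Constraint term and conclusion.} The constraint term $\inf_s\{\tfrac12s^2\psi_0+\alpha_\star\E\log\Psi(V_s)\}$ in $\mathcal S_\star$ is, by the displayed formula for $V_s(a)$, exactly $G(a(\boldsymbol\lambda))$. Verifying this means substituting $\E[X\Lambda]=a_1$ and $\E[M\Lambda]=a_2$ into Huang's Gaussian conditioning formula. Therefore $\mathcal S_\star(\boldsymbol\lambda)=H(a(\boldsymbol\lambda))+G(a(\boldsymbol\lambda))$. Finally, $|\Lambda_{\boldsymbol\lambda}|\le1$, so $a(\boldsymbol\lambda)\in K$. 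The supremum over $\R^2$ is then a supremum over a subset of $K$, which gives the stated inequality.
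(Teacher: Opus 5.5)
Your proposal is correct and follows essentially the paper's route. You integrate the pointwise Lagrangian (Fenchel) inequality $\mathrm{ent}_2(\tfrac{1+\ell}{2})\le\log2\cosh t-t\ell$ to get \eqref{eq:dual-bound}, identify $\tanh(\lambda_1X+\lambda_2M)$ as the pointwise maximizer so that $\ent(\Lambda_{\boldsymbol\lambda})=H(a(\boldsymbol\lambda))$, and use that the constraint term sees $\Lambda$ only through $(a_1,a_2)$; your sandwich argument (feasibility of $\Lambda_{\boldsymbol\lambda}$ plus weak duality at $b=\boldsymbol\lambda$) just makes explicit the sufficiency step the paper compresses into ``the Lagrange condition that defines $\Lambda$.''
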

\begin{proof}
Fix $\boldsymbol\lambda=(\lambda_1,\lambda_2)$ and write
$\Lambda=\Lambda_{\lambda_1,\lambda_2}$,
$a_i=a_i(\boldsymbol\lambda)$. The constraint term of $\mathcal S_\star$ depends on
$\Lambda$ only through $(a_1,a_2)$ (it enters
$\mathcal S_\star(\Lambda,s)$ only via $\E[M\Lambda]=a_2$ and
$\E[X\Lambda]=a_1$, in the notation of \S\ref{sec:huang-moments}), so
$\mathcal S_\star(\lambda_1,\lambda_2)=\ent(\Lambda)+G(a_1,a_2)$. It remains
to show $\ent(\Lambda)=H(a_1,a_2)$, i.e. that $\Lambda$ maximizes the entropy
among all profiles with moments $(a_1,a_2)$. This is the Lagrange condition
that defines $\Lambda$: maximizing $\E\,\mathrm{ent}_2(\tfrac{1+\Lambda}2)$
over $|\Lambda|\le1$ subject to $\E[X\Lambda]=a_1,\E[M\Lambda]=a_2$ has,
by pointwise optimization of the Lagrangian
$\mathrm{ent}_2(\tfrac{1+\Lambda}2)+(\lambda_1 X+\lambda_2 M)\Lambda$, the
maximizer $\Lambda^\ast=\tanh(\lambda_1 X+\lambda_2 M)=\Lambda$. Hence
$\ent(\Lambda)=H(a_1,a_2)$ and
$\mathcal S_\star(\lambda_1,\lambda_2)
=(H+G)(a_1(\boldsymbol\lambda),a_2(\boldsymbol\lambda))$.
Taking suprema gives the stated one-sided inequality; no density assertion
about the finite-parameter image is needed. For \eqref{eq:dual-bound}, weak
duality (and strong duality in the interior) gives
$H(a)\le\inf_b[\Phi(b)-b\cdot a]$, where the inner
maximization of the Lagrangian over $|\Lambda|\le1$ gives
$\max_\Lambda[\mathrm{ent}_2(\tfrac{1+\Lambda}2)+\theta\Lambda]=\log2\cosh\theta$
at $\theta=b_1X+b_2M$; the stated inequality is the weak-duality direction,
valid for every $b$.
\end{proof}

The ray argument below differentiates $H$, so it also requires a finite dual
throughout the star.  This does not follow merely from compactness of $K$; we
certify a uniform interior neighborhood constructively.

\begin{lemma}[Uniform interior neighborhood]\label{lem:star-interior}
Uniformly over the certified interval for $\psi_0$, let $f=(X,\tanh X)$ and
$a^\star=\E[f\tanh X]=\nabla\Phi(1,0)$.  Then
\[
 B_2(a^\star,0.0150391982)\subset K.
\]
In particular, the full interval-inflated Region-I star, whose displacement
from the true $a^\star$ is at most
$0.012096$, lies in $\operatorname{int}K$.
For every point $a$ in that star there is a unique finite dual $b(a)$, and
\[
 H(a)=\Phi(b(a))-b(a)\cdot a,\qquad
 \nabla H(a)=-b(a),\qquad
 \nabla^2H(a)=-[\nabla^2\Phi(b(a))]^{-1}.
\]
\end{lemma}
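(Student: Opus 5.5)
The ball inclusion will come from the support function of $K$. A compact convex set contains $B_2(a^\star,r)$ exactly when its support function satisfies $h(u)\ge u\cdot a^\star+r$ for every unit vector $u$. Writing $u=(\cos\theta,\sin\theta)$ and using $h(u)=\E|uX+vM|$ together with $a^\star=\E[f\tanh X]$, the certificate needed is
\[
 g(\theta):=\E\bigl[\,|u\cdot f| - (u\cdot f)\tanh X\,\bigr]=\E\bigl[(u\cdot f)\bigl(\operatorname{sign}(u\cdot f)-\tanh X\bigr)\bigr]\ \ge\ 0.0150391982
\]
for all $\theta\in[0,2\pi)$, uniformly over the $\psi_0$ ball. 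The integrand is nonnegative pointwise, and positivity is exactly the statement that $\tanh$ is not saturated. I would cover $[0,\pi]$ (symmetry $u\mapsto-u$ suffices) by a finite grid of $\theta$-cells. On each cell I would evaluate $g$ in mean-value form, center value plus an enclosure of $g'(\theta)$ times the radius. Here $g'(\theta)=\E[(u^\perp\cdot f)(\operatorname{sign}(u\cdot f)-\tanh X)]$, which is valid because $|u\cdot f|$ is Lipschitz in $\theta$ and the kink set has measure zero. Since $\tanh$ is odd and increasing, $u\cdot f=\cos\theta\,X+\sin\theta\tanh X$ changes sign only at $X=0$ unless $\cos\theta$ and $\sin\theta$ have opposite signs. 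In that case there is a single symmetric pair of crossings $\pm x_\theta$ solving $\cos\theta\,x+\sin\theta\tanh x=0$, which can be enclosed by interval Newton. Each cell value is then a sum of one-dimensional Gaussian integrals over at most three intervals, computed by certified adaptive quadrature with explicit Gaussian tail bounds, with $\psi_0$ carried as a ball. The constant $0.0150391982$ is the certified minimum over the cells, rounded down.

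The main obstacle is this sweep near the directions where $g$ is smallest, and near $\theta$ values where $x_\theta\to0$ or $\infty$ (close to $\tan\theta=-1$, where the crossing degenerates at the origin). There I would use $|u\cdot f|$ directly via the bound $||y|-|y'||\le|y-y'|$ instead of differentiating, so that the cell enclosure stays Lipschitz-sized.

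Given the ball, the second claim is arithmetic: the star lies within $0.012096<0.0150391982$ of $a^\star$, so it lies in the open ball and hence in $\operatorname{int}K$. For the dual statements I would use that $\Phi$ is smooth and strictly convex. Indeed $\nabla^2\Phi(b)=\E[ff^\top\operatorname{sech}^2(b\cdot f)]$ is positive definite because $X$ and $\tanh X$ are linearly independent on the support of a Gaussian. Also $\nabla\Phi(b)=\E[f\tanh(b\cdot f)]$ takes values in $\operatorname{int}K$. For interior $a$, the function $b\mapsto\Phi(b)-b\cdot a$ is coercive: along a unit direction $u$ its recession slope is $h(u)-u\cdot a>0$, which is positive for every $u$ precisely because $a$ is interior. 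Hence it has a unique minimizer $b(a)$ with $\nabla\Phi(b(a))=a$. Strong duality, combined with the pointwise Lagrangian computation of Lemma~\ref{lem:moment}, gives $H(a)=\Phi(b(a))-b(a)\cdot a$, attained by $\Lambda=\tanh(b(a)\cdot f)$. The inverse function theorem makes $b$ smooth with $Db=[\nabla^2\Phi(b)]^{-1}$. The envelope theorem gives $\nabla H=-b(a)$, and differentiating once more gives $\nabla^2H=-[\nabla^2\Phi(b(a))]^{-1}$.
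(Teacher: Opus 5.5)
\textbf{Gap: the half-circle reduction.} Your dual-variable argument is the paper's: coercivity from $h(u)-u\cdot a>0$ via $\log2\cosh y\ge|y|$, a positive-definite Hessian, a unique minimizer, and the inverse-function theorem. The ball inclusion, as you set it up, has a real error, namely the reduction to $\theta\in[0,\pi]$. $K$ is centrally symmetric, so $h(-u)=h(u)$. But the ball is centered at $a^\star\neq0$, so
\[
 g(\theta+\pi)=h(u)+u\cdot a^\star=g(\theta)+2\,u\cdot a^\star .
\]
The binding directions lie exactly in the half you discard.

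For $\theta\in[-\pi/4,\pi/2]$ one has $\operatorname{sign}(u\cdot f)=\operatorname{sign}X$, so
$g(\theta)=\cos\theta\,\E[|X|(1-|\tanh X|)]+\sin\theta\,\E[|\tanh X|(1-|\tanh X|)]$. At $\theta=-\pi/4$ this equals $(\E|X|-\E|\tanh X|-a^\star_1+a^\star_2)/\sqrt2\approx0.022$. The global minimum, about $0.018$, sits slightly below $-\pi/4$, where the nonzero crossings $\pm x_\theta$ first appear. Note that these crossings exist only when $\cos\theta\sin\theta<0$ \emph{and} $|\tan\theta|>1$, not whenever the signs are opposite. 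By contrast, the antipodal direction $\theta=3\pi/4$ has $g\approx0.81$, and on all of $[0,\pi]$ your $g$ stays above roughly $0.08$. Your half-circle sweep would therefore ``certify'' a ball several times too large, which is false.

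The fix is to sweep the whole circle, or equivalently to test $\E|u\cdot f|-|u\cdot a^\star|\ge r$ on a half circle. With that correction the route works, with a margin of roughly $0.003$. The constant $0.0150391982$ is then not ``the certified minimum rounded down''; your sweep would produce its own, larger constant, which still implies the statement.

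\textbf{Comparison with the paper.} The paper avoids any angular sweep by proving the inclusion from the inside. It uses the admissible profiles $\tanh X+c_0\rho_0+c_1\rho_1$ with $|c_0|+|c_1|\le1$, where $\rho_0=\operatorname{sgn}(x)(1-|\tanh x|)$ and $\rho_1$ is $\rho_0$ with its sign flipped for $|x|>3/4$. These give $a^\star+Ac\in K$ for $A_{ij}=\E[f_i\rho_j]$. The resulting parallelogram has inradius $\det A/\max\{\|p-q\|_2,\|p+q\|_2\}=0.01503919\ldots$. This costs four certified integrals and a $2\times2$ determinant: no crossing enclosures, no kink handling near $\tan\theta=-1$, and no room for the symmetry slip above. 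The price is a non-sharp radius, which still suffices against $0.012096$. Your support-function test is sharp but needs a full circle of certified cells.

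\textbf{Minor omission.} The paper also derives the star displacement bound $0.012096$ from the leaf geometry: radius $0.012$, angular width $0.016$ plus paddings, and center uncertainty $2\sqrt2\cdot10^{-7}$. You take this bound as given.
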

\begin{proof}
Write $M=\tanh X$ and define
\[
 \rho_0(x)=\operatorname{sgn}(x)(1-|\tanh x|),\qquad
 \rho_1(x)=\rho_0(x)
 \begin{cases}1,&|x|\le3/4,\\-1,&|x|>3/4.
 \end{cases}
\]
If $|c_0|+|c_1|\le1$, then
$\Lambda_c=M+c_0\rho_0+c_1\rho_1$ satisfies
$|\Lambda_c|\le |M|+(1-|M|)(|c_0|+|c_1|)\le1$.  Hence
$a^\star+Ac\in K$, where $A_{ij}=\E[f_i\rho_j]$.  The certified uniform
enclosure is
\[
A\in\begin{pmatrix}
[0.15726546516,0.15726546571]&
[-0.01079401984,-0.01079401929]\\
[0.12551630837,0.12551630878]&
[0.01084146741,0.01084146783]
\end{pmatrix}.
\]
For its columns $p,q$, interval arithmetic gives
\[
 \begin{aligned}
 \det A&\ge0.003059813871082545,&
 \|p-q\|_2&\le0.2034559164094782,\\
 &&\|p+q\|_2&\le0.2001182846546655.
 \end{aligned}
\]
The image under $A$ of the $\ell^1$ unit diamond therefore contains a
Euclidean ball of radius
\[
 \frac{\det A}{\max\{\|p-q\|_2,\|p+q\|_2\}}
 >0.01503919829455.
\]
The widest long-radius angular leaf has width $0.016$.  Including its
$10^{-8}$ angular padding and a $10^{-10}$ Arb-rounding guard, the independently
enclosed sine and cosine have joint norm at most
$\sqrt{1+\sin(0.016+2\cdot10^{-8}+10^{-10})}$.  The radial padding is
$10^{-12}$, and the stored-center interval can be $2\cdot10^{-7}$ away
from the true center in each coordinate.  Thus every interval box on which
$H$ is differentiated is within
\[
 (0.012+10^{-12})\sqrt{1+\sin(0.016+2\cdot10^{-8}+10^{-10})}
 +2\sqrt2\,10^{-7}<0.012096
\]
of $a^\star$.  This is the required-radius inequality replayed by the
certificate.  It proves the inclusion and leaves more than $0.002943$
clearance beyond the padded star.

For any such $a$, choose $\varepsilon>0$ with
$B_2(a,\varepsilon)\subset K$.  If $u$ is a unit vector and $t\ge0$, then
\[
 \Phi(tu)-tu\cdot a
 \ge t\bigl(h_K(u)-u\cdot a\bigr)\ge t\varepsilon,
\]
because $\log(2\cosh y)\ge|y|$ and $h_K(u)=\E|u\cdot f|$.  Thus the dual
objective is coercive.  Its Hessian
$\E[ff^\top\operatorname{sech}^2(b\cdot f)]$ is positive definite: a
nonzero linear combination of $X$ and $\tanh X$ cannot vanish almost surely.
The minimizer is consequently finite and unique.  Pointwise entropy duality
gives the equality for $H$, and the inverse-function theorem gives the two
displayed derivative identities.
\end{proof}

Region~II certifies, cell by cell, the inequality
$\min_k[\Phi(b^k)-b^k\!\cdot a]+\tfrac12(s^C)^2\psi_0+\alpha_\star T_C(a)<0$ on
the cell, where $T_C$ encloses $\E\log\Psi(V_{s^C})$ over the cell by a
mean-value rule in $(a_1,a_2)$. By Lemma~\ref{lem:moment} the first term
bounds $H$ and the rest bounds $G$ from above on the cell, so the certified
inequality gives $(H+G)<0$ there. Cells whose closure misses $K$ are dropped:
a cell lies outside $K$ once some direction $(u,v)$ has
$\min_{a\in\mathrm{cell}}(u a_1+v a_2)>h(u,v)$, which is checked against a
certified upper bound on $h(u,v)=\E|uX+vM|$ over a fan of directions.

\section{The compact sweep and the degenerate point}\label{sec:huang-sweep}

Write $a^\star=(\psi_0(1-q_0),q_0)$ for the image of $(\lambda_1,\lambda_2)=(1,0)$;
the fixed-point identities give $H(a^\star)+G(a^\star)=0$, and the following
verification proves global maximality. We verify
\eqref{eq:moment-reduction} in two pieces.

\emph{Region II (the bulk).} For every cell $C$ of a bounded, adaptively
refined grid on $K$ away from the star region of Region~I, we certify
\[
  \bigl[\Phi(b^C) - b^C_1 a_1 - b^C_2 a_2\bigr]
  + \tfrac12 (s^C)^2\psi_0 + \alpha_\star\, T_C \;<\; 0
  \qquad\text{on } C,
\]
where $b^C,s^C$ are the (nonrigorous) per-cell dual and tilt and $T_C$ is a
mean-value enclosure of $\E\log\Psi(V_{s^C})$ over $C$. Cells lying outside
$K$ (detected by the support function $h$ along a fan of directions)
contribute nothing and are dropped; cells straddling $\partial K$ are handled
by pulling the dual inward, using that \eqref{eq:dual-bound} holds for any
fixed $b$. The sweep runs in two stages---all of $K$ minus a coarse box
around $a^\star$, then that box minus the star, bisecting to side $10^{-3}$
along the star boundary---and a cell is skipped only if a certified witness
places the full rectangle inside the core disk or inside one common signed
weak-axis cone by affine inequalities at all four corners. Both stages terminate
with every leaf cell strictly negative.

\emph{Region I (the maximizer).} Near the maximizer the value is genuinely
$0$ at $a^\star$, so no direct sign check can succeed there, and interval evaluation
of any Hessian of the problem over a two-dimensional box fails for a
structural reason: $\nabla^2\Phi(1,0)$ has condition number $\approx 82$
(the pair $(X,\tanh X)$ is strongly correlated), so the dual image of an
$a$-box is stretched by a factor $\approx 220$ along one direction and every
box enclosure of $\nabla^2 H=-[\nabla^2\Phi]^{-1}$ loses the cancellation it
needs. We work instead along rays.  The certificate fixes the single exact
rational vector
\[
 C=(-2.448324,\,0.790403)
\]
globally, and for every unit direction $v$ uses
$\dot\sigma_v=C\cdot v$ on every radial band of that ray.  Thus all bands
for a given $v$ concern the same function
\[
  \varphi_v(t) = H(a^\star+tv) + \tfrac12 s(t)^2\psi_0
     + \alpha_\star \mathcal T\bigl(a^\star+tv, s(t)\bigr),
  \qquad s(t)=s_0+t\dot\sigma_v,\quad s_0=\sqrt{1-q_0} .
\]
The implications used by the interval program are isolated in the next
lemma.  This makes explicit both the matrix-order direction and the
quantifiers in the dual-localization check.

\begin{lemma}[Ray and localization certificate]\label{lem:ray-localization}
Let $P\Subset\operatorname{int}K$ be one certified moment polygon, let
$L\subset\R^2$ be a closed bounded rectangle, and let
$\widehat b_1,\ldots,\widehat b_m$
lie in $\operatorname{int}L$.  For $a\in P$ set
$F_a(b)=\Phi(b)-b\cdot a$ and
\[
 S_a=\{b:F_a(b)\le\min_kF_a(\widehat b_k)\}.
\]
Suppose each certified subsegment $E\subset\partial L$ and each tangent
polygon $P_j$ covering $P$ has fixed weights
$w_{E,j,k}\ge0$, $\sum_kw_{E,j,k}=1$, for
which
\begin{equation}\label{eq:anchor-boundary}
 \sum_k w_{E,j,k}\{\Phi(b)-\Phi(\widehat b_k)
              -(b-\widehat b_k)\cdot a\}>0
 \quad (b\in E,\ a\in P_j).
\end{equation}
Then the entropy dual $b(a)$ lies in $\operatorname{int}L$ for every
$a\in P$.  Moreover, with $f=(X,\tanh X)$ define
\[
 B_L=\E\left[ff^\top\max_{b\in L}\operatorname{sech}^2(b\cdot f)\right],
\]
and let $\widehat B_L\succeq B_L$ be any certified positive-definite
majorant. Then, for every $v\in\R^2$,
\begin{equation}\label{eq:entropy-ray-bound}
 v^\top\nabla^2H(a)v\le-v^\top \widehat B_L^{-1}v.
\end{equation}
Consequently, fix a unit vector $v$, a radius $R(v)>0$, and the preceding
global choice $\dot\sigma_v=C\cdot v$.  Suppose
$0=t_0<t_1<\cdots<t_r=R(v)$ and that for every $i$ there are a certified
polygon $P_i\Subset\operatorname{int}K$ and a closed bounded rectangle
$L_i$ satisfying the preceding localization hypotheses, with
$\{a^\star+tv:t\in[t_{i-1},t_i]\}\subset P_i$.  Assume also throughout the
prefix that $a_2(t)^2<q_0$ and $s(t)\ge0$, and that on every segment the
certificate bounds the second derivative of the nonentropy part of
$\varphi_v$ by $v^\top \widehat B_{L_i}^{-1}v$.  Then $H+G\le0$ on the full ray prefix
$0\le t\le R(v)$.
\end{lemma}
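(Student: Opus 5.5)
The proof has three parts, taken in order: localization of the dual, the Hessian comparison \eqref{eq:entropy-ray-bound}, and the ray concavity argument.

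\emph{Localization.} Fix $a\in P$ and let $j$ be an index with $a\in P_j$. By Lemma~\ref{lem:star-interior}, or by the same coercivity argument applied to $P\Subset\operatorname{int}K$, the function $F_a$ is strictly convex and coercive. Its unique minimizer $b(a)$ therefore lies in the sublevel set $S_a$, which is convex and compact. Now take $b\in E\subset\partial L$. Hypothesis \eqref{eq:anchor-boundary} states
\[
\sum_k w_{E,j,k}\bigl(F_a(b)-F_a(\widehat b_k)\bigr)>0,
\]
so $F_a(b)>\min_k F_a(\widehat b_k)$, and hence $b\notin S_a$. Thus $S_a\cap\partial L=\emptyset$. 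On the other hand $S_a$ is connected and contains the anchor attaining the minimum, which lies in $\operatorname{int}L$. It follows that $S_a\subset\operatorname{int}L$, and in particular $b(a)\in\operatorname{int}L$.

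\emph{Hessian comparison.} For $b\in L$ we have the pointwise matrix inequality
\[
\nabla^2\Phi(b)=\E[ff^\top\operatorname{sech}^2(b\cdot f)]\preceq B_L\preceq\widehat B_L.
\]
Both sides are positive definite: the left side by the argument in Lemma~\ref{lem:star-interior}, the right side by hypothesis. Matrix inversion is operator-monotone decreasing on positive definite matrices, so
\[
[\nabla^2\Phi(b(a))]^{-1}\succeq\widehat B_L^{-1}.
\]
Combining this with $\nabla^2H=-[\nabla^2\Phi(b(a))]^{-1}$ from Lemma~\ref{lem:star-interior} gives \eqref{eq:entropy-ray-bound}. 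The quantifier order is the point to watch here: the localization must be established for every $a$ in the polygon before we may evaluate the Hessian at $b(a)$.

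\emph{Ray concavity.} On each segment $[t_{i-1},t_i]$ the points $a^\star+tv$ lie in $P_i$, so the two previous steps give $\frac{d^2}{dt^2}H(a^\star+tv)\le-v^\top\widehat B_{L_i}^{-1}v$. The conditions $a_2^2<q_0$ and $s\ge0$ make the nonentropy term $\tfrac12s(t)^2\psi_0+\alpha_\star\mathcal T$ a smooth, legitimate upper bound for $G$, since $G\le$ that expression for any fixed $s\ge0$. This term is $C^2$ in $t$, with second derivative bounded by $v^\top\widehat B_{L_i}^{-1}v$. Summing the two bounds yields $\varphi_v''\le0$ on each segment. Since $H$ is $C^2$ on the star and $\dot\sigma_v$ is the same on every band, $\varphi_v$ is a single $C^2$ function, and hence concave on $[0,R(v)]$.

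It remains to evaluate $\varphi_v$ and its derivative at the center. The fixed-point identities give $\varphi_v(0)=H(a^\star)+G(a^\star)=0$. They also give $\varphi_v'(0)=0$: the center is a stationary point of the joint functional in $(a,s)$, because $\nabla H(a^\star)=-b(a^\star)=-(1,0)$ cancels $\nabla_a$ of the constraint term at $s_0$, and $\partial_s$ vanishes there as well. This pinned-gradient identity is the main obstacle. I would verify it directly from Huang's stationarity equations $q_0=P(\psi_0)$ and $\psi_0=R_{\alpha_\star}(q_0)$, checking that the tilt path $s(t)$ contributes $\partial_s\cdot\dot\sigma_v=0$. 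Given $\varphi_v(0)=0$, $\varphi_v'(0)=0$ and concavity, we get $\varphi_v(t)\le0$ on $[0,R(v)]$. Therefore $H+G\le\varphi_v\le0$ along the whole ray prefix.
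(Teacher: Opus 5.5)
Your proposal is correct and follows the paper's proof essentially step for step. The paper's argument has the same three stages: the weighted anchor inequality keeps $\partial L$ outside the convex sublevel set $S_a$, which contains an interior anchor, so $b(a)\in\operatorname{int}L$; Loewner-order reversal under inversion, combined with $\nabla^2H=-[\nabla^2\Phi(b(a))]^{-1}$, gives \eqref{eq:entropy-ray-bound}; and $\varphi_v$ is a single function on the whole prefix (same slope $\dot\sigma_v$ on every band), hence concave, with $\varphi_v(0)=\varphi_v'(0)=0$ and $H+G\le\varphi_v$.

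The pinned-gradient identity you single out is exactly what the paper imports from Huang's fixed-point equations, and your stationarity sketch for it holds: $\alpha_\star\nabla_a\mathcal T(a^\star,s_0)=(1,0)=b(a^\star)$, and the $s$-derivative vanishes because $\psi_0=\alpha_\star\E\bN^2$. The paper's extra remark about checking \eqref{eq:anchor-boundary} at polygon vertices via concavity in $a$ only explains how that hypothesis is certified; it is not part of the lemma's logic.
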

\begin{proof}
By Lemma~\ref{lem:star-interior}, $F_a$ has the unique minimizer $b(a)$, and
$b(a)\in S_a$.  At least one anchor attaining the displayed minimum belongs
to $S_a\cap\operatorname{int}L$.  Condition~\eqref{eq:anchor-boundary}
implies
\[
 \max_k\{F_a(b)-F_a(\widehat b_k)\}
 \ge\sum_kw_{E,j,k}\{F_a(b)-F_a(\widehat b_k)\}>0
\]
on $\partial L$, so $S_a$ misses the boundary.  Convexity of $S_a$ and a
line-segment argument from its interior anchor show that $S_a\subset
\operatorname{int}L$.

The actual boundary sweep proves \eqref{eq:anchor-boundary} in exact
mean-value form.  On a segment of half-length $h$ and midpoint $b_0$, its
lower bound is the midpoint value minus
$h\sup_E|\partial_u\Phi(b)-a_u|$.  For fixed weights this lower bound is a
concave function of $a$ (affine minus a positive multiple of an absolute
value), so strict positivity at every vertex of the tangent polygon implies
strict positivity throughout it.  The same weights are retained over the
whole boundary subsegment and polygon; they are never selected separately at
different vertices.

For $b(a)\in L$,
\[
 \nabla^2\Phi(b(a))\preceq B_L\preceq\widehat B_L.
\]
These matrices are positive definite, and inversion reverses Loewner order:
\[
 \widehat B_L^{-1}\preceq B_L^{-1}
 \preceq[\nabla^2\Phi(b(a))]^{-1}.
\]
Lemma~\ref{lem:star-interior} now gives \eqref{eq:entropy-ray-bound}.  On
each $[t_{i-1},t_i]$ the asserted nonentropy bound therefore gives
$\varphi_v''\le0$.  The radial pieces form a gapless partition of
$[0,R(v)]$, and the same fixed slope $\dot\sigma_v=C\cdot v$ is used on
every piece, so these are restrictions of one continuously differentiable
function rather than unrelated annular majorants.  Hence $\varphi_v'$ is
nonincreasing on the entire prefix and $\varphi_v$ is concave there.
Finally, $G$ is the infimum over the tilt, so $H+G\le\varphi_v$, while
Huang's exact fixed-point identities give
$\varphi_v(0)=\varphi_v'(0)=0$.  It follows that
$\varphi_v(t)\le0$, and therefore $H+G\le0$, for every
$0\le t\le R(v)$.
\end{proof}

The matrix $B_L$ is a one-dimensional integral whose inner maximum is a
per-$z$ corner-or-zero selection. In the positive-root branch the implementation
replaces the true zero-straddling strip by the certified outer strip
$|z|\le z_q$ and uses weight one there, producing the Loewner majorant
$\widehat B_L\succeq B_L$;
in the zero-root branches $\widehat B_L=B_L$. The global entropy bound
$B_{\R^2}=\E[ff^\top]$ settles all but a narrow angular window around the weak
eigenvector; inside it the certified bounded boxes become sufficiently tight
around $(1,0)$ for $\widehat B_L$ to retain the sharpness needed by the certificate.

It remains to match the polar certificates to the stage-2 skips.  Let
$\omega(\theta)$ be angular distance to either weak-eigenvector axis.  The
stage-2 test uses the deliberately shrunken effective radii
\[
 T_{\rm eff}(\theta)=
 \begin{cases}
 0.012,&\omega(\theta)\le0.1479995,\\
 0.008,&\omega(\theta)\le0.4199995,\\
 0.005,&\text{otherwise}.
 \end{cases}
\]
The stage-2 inclusion witness encloses the maximum corner radius in Arb,
including the certified center uncertainty. It either accepts the rectangle
inside the everywhere-valid $0.005$ disk, or selects one common signed weak
axis and verifies positive projection and both affine half-cone inequalities
at all four corners. The cone half-angles are shrunk by $0.012$ and $0.030$
and, like every radius, by a further $5\cdot10^{-7}$. These angular
shrinkages exceed the adjacent Region-I chunk half-widths $0.008$ and
$0.025$. Convexity then puts the entire
rectangle inside the corresponding shrunken disk or cone. Thus every skipped
rectangle lies in the union of accepted Region-I polar cells, so the two
regions have neither a geometric gap nor an unchecked boundary sliver. All of
these checks use the full Ding--Sun parameter balls.

\section{Assembly}\label{sec:assembly}

The pieces combine as follows.

\emph{Upper bound.} Huang's upper-bound theorem~\cite{huang2024capacity}
reduces the capacity upper bound at $\kappa=0$ to four numerical conditions.
His interval code verified the AMP and local-concavity conditions. The
parameter rectangle was inherited from Ding--Sun Proposition~1.3 and is
re-verified here in \S\ref{sec:gardner} (Block~1). His remaining
Condition~1.3 is the statement
$\mathcal S_\star\le 0$ on $\R^2$. In moment coordinates
(\S\ref{sec:huang-moments}) the one-sided reduction shows that
$\sup_{K}(H+G)\le 0$ is sufficient; the two sweeps of
Region~II (\S\ref{sec:huang-sweep}) certify $H+G<0$ on $K$ outside the
certified star region around $a^\star$ (the first over $K$ minus a coarse
box, the second over that box minus the star, bisecting to side $10^{-3}$
along the star boundary), and Region~I's ray certificates give $H+G\le0$ on
the star itself. Hence $\sup_K(H+G)=0$, and the one-sided reduction gives
Condition~1.3. Huang's upper bound
$\lim_N \PP(M_N/N\ge\alpha)=0$ for $\alpha>\alpha_\star$ becomes
unconditional.

\emph{Lower bound.} Under their Condition~1.2, Ding--Sun's main theorem gives
\[
 \liminf_{N\to\infty}\PP(M_N/N\ge\alpha)>0
 \qquad (\alpha<\alpha_\star).
\]
Nakajima--Sun's theorem for general subgaussian disorder upgrades this to high
probability; it extends Xu's Bernoulli-model result. Writing
$\mathrm{PG}=\mathcal H+\mathcal P$, the global-sign clause of
Condition~1.2 is $\mathscr S_\star(\lambda)<0$ for
$\lambda\notin\{0,1\}$, and the covering of
$[\lambda_{\min},1]$ follows their own decomposition: the value grids on
$[0.2,0.982]$ and $[\lambda_{\min},-0.125]$ (part (a), our Block~3a), the
derivative signs $\mathrm{PG}'<0$ on $[0.05,0.2]$ and $\mathrm{PG}'>0$ on
$[-0.125,-0.03]$ (part (b)), the second derivative $\mathrm{PG}''<0$ on
$[-0.03,0.05]$ around the degenerate zero $\lambda=0$ where
$\mathrm{PG}(0)=\mathrm{PG}'(0)=0$ (part (c)), and the near-one analysis on
 $[0.982,1)$ (Block~2); the value grid reaches beyond $0.982$, so the two
 pieces overlap. The central certificate and $\mathcal A''(0)\le0$
 \cite[discussion following eq.~(2.35)]{ding2018capacity} give the
 separate journal-version clause $\mathscr S_\star''(0)<0$; Ding--Sun's exact
 calculation \cite[proof of Theorem~1.4]{ding2018capacity} gives
 $\mathscr S_\star'(\lambda)\to+\infty$ as
 $\lambda\uparrow1$, which is stronger than their endpoint clause. Parts (b)
 and (c) use the exact derivative identities
$\mathcal H'=-(1-q_\star)\log A/2$, $\mathcal
H''=-(1-q_\star)/(2A\,\ell'(A))$ and certified two-dimensional integrals for
$I'$ and $I''$ (with closed-form Gaussian-moment bounds for the mass outside
the integration box); the interval endpoints are pinned by the certified
monotone map $\lambda=\ell(A(\tau))$, exactly as in part (a). On the negative
interval of part (b) the margins are uniformly thin (about $2\times10^{-3}$)
while a ball-parameter evaluation of $I'$ wraps the cell width by two orders
of magnitude, so the cells there are certified in mean-value form: $I'$ at
the exact cell center, plus a bound on $|\mathrm{PG}''|$ times the cell
radius, the latter assembled from certified point values of $I''$ on a fine
grid together with a closed-form bound on $|I'''|$ (a degree-four
Gaussian-moment estimate) that controls the values between grid points.

\emph{Disorder.} Krauth and M\'ezard's model has $\pm1$ (Bernoulli) disorder;
we work with Gaussian disorder throughout. Nakajima--Sun show that the
sharp-threshold locations differ by $o(1)$ across the standardized subgaussian
class; hence the Gaussian convergence transfers to Bernoulli disorder and the
other distributions stated in Theorem~\ref{thm:main}.

\emph{What is machine-checked.} Every numerical certificate invoked above is
a finite list of ball inequalities between explicitly defined real numbers,
established by Arb; the analytic reductions around them (the moment-body reduction, convex
duality, the ray majorants and their pinned identities, the localization and
$B_L\preceq\widehat B_L$ matrix arguments, and the reductions to certified quadrature) are proved on
paper. The nonrigorous companion (\texttt{huang\_np.py}) only selects
per-cell duals, tilts, anchors, and candidate boxes, which enter the
certificates as fixed rationals; no certified inequality depends on it.
For Region~I, the final verifier does not merely inspect the signs of stored
balls: from the frozen source it recomputes every fixed-weight localization
witness, every entry of $\widehat B_L$ and its determinant, every inverse quadratic
form, and every radial curvature enclosure, then requires byte-exact packet
agreement.  The top-level verification also binds the complete angular and
radial trees, the four-artifact Region-I/II delegation chain, and the pinned
Python executable and python-flint arithmetic core.

\emph{Inventory.} The final source-bound runs comprise: the parameter rectangle, 13 checks; Huang Region~II stage~1, $1200$ top cells and $1822$ verified leaves; stage~2, $240$ top cells and $1499$ verified leaves; Huang Region~I, $1404$ band jobs, $4542$ certified angular leaves, and $16543$ radial pieces; Ding--Sun Block~3a, $247$ top cells and $263$ recursive leaves; the corrected near-one block; and Block~3b/c at $K_{\mathrm{run}}=21/2$, with $24$ positive-branch top cells ($124$ leaves), $331$ negative-branch top cells ($578$ leaves), and $16$ central top cells ($185$ leaves), supported by a certified $59$-point $I''$ grid. The canonical JSON certificates and their complete raw record trees are included with the source.

\begin{remark}[Soundness audit]
Two implementation errors were caught during this work by independent
cross-checks and corrected before the final runs; we record them because
they illustrate what interval arithmetic does and does not protect against.
First, an early version of the fixed-grid quadrature multiplied the
derivative enclosure of the mean-value remainder by the \emph{signed} first
moment $\int_{\text{cell}}(z-m)\varphi\,dz\approx 0$; since the enclosed
derivative varies over the cell while $(z-m)$ changes sign, the remainder
does not factor through the signed moment, and the resulting balls were
systematically too tight---detected because a 30-digit quadrature of
$\nabla\Phi(1,0)$ fell outside them. The corrected rule handles the
positive and negative parts of $(z-m)$ separately. Second, a sign error in
$\partial_s^2$ of the tilted constraint term ($+$ for $-$) survived until a
finite-difference cross-check of every ingredient of $\varphi_v''$ against
an independent floating-point implementation. Neither error is visible to
the interval machinery itself: Arb guarantees the arithmetic, not the
formulas. All certificates reported here postdate both corrections.
\end{remark}

\section*{Funding and declarations}
The author acknowledges support from the European Research Council under the
European Union's Horizon Europe programme (grant 101041711), the Simons
Foundation, Heights Labs, and Israel Science Foundation grants 2258/19 and
4101/25.

Most of the work reported here was done by two AI systems: Fable~5
(Anthropic) and Codex (OpenAI). Fable~5 developed the main mathematical
architecture---the moment-coordinate reduction, the ray majorants and their
pinned identities, the $B_L\preceq\widehat B_L$ bound, the sublevel-set
localization, and the corrected near-one chain---and wrote the bulk of the
proof code and the initial manuscript. Codex performed a separate internal
re-audit of the final source and proof boundary, applied and validated the confirmed
full-circle normalization fix in the Region~I verifier, validated and landed
the repaired Block~3a and Huang closures, completed the fresh Block~3b/c and
final closure, hardened the source-bound receipt and cross-platform release
gates, assembled the final reproducibility package, and audited the final
manuscript and public-release readiness.

Both systems operated in autonomous goal modes: the author set the objective
of resolving the outstanding conditions, and the systems worked toward that
objective over extended unattended sessions, choosing approaches,
implementing and running certificates, detecting and repairing errors through
separate cross-checks, and iterating until the verifications closed. The
author supervised the runs, reviewed the mathematics, and accepts
responsibility for the final manuscript. This is
part of a broader research project of the author on the extent to which
current AI systems can resolve open problems in mathematics.

\end{document}